\documentclass[letterpaper, 10 pt, conference]{ieeeconf} 
\IEEEoverridecommandlockouts
\usepackage{cite}
\usepackage{amsmath,amssymb,amsfonts}
\usepackage{graphicx}
\usepackage[hidelinks]{hyperref}
\usepackage{textcomp}
\usepackage{mathrsfs}
\usepackage{mathtools}
\usepackage{stmaryrd}
\usepackage{booktabs}
\usepackage{siunitx}
\usepackage{physics}

\mathtoolsset{showonlyrefs}

\newtheorem{definition}{Definition}
\newtheorem{assumption}{Assumption}

\newtheorem{theorem}{Theorem}

\newtheorem{problem}{Problem}

\newcommand{\asmref}[1]{Assumption~\ref{#1}}

\newcommand{\probref}[1]{Problem~\ref{#1}}

\newcommand{\secref}[1]{Section~\ref{#1}}

\newcommand{\figref}[1]{Fig.~\ref{#1}}
\newcommand{\tabref}[1]{Table~\ref{#1}}

\allowdisplaybreaks[4]

\title{\LARGE \bf
Equivalence Between Continuous-Time Risk-Sensitive Control and R{\'e}nyi Divergence Minimization
}

\author{Shinji Kataoka, Kaoru Teranishi, and Yasumasa Fujisaki
\thanks{This work was not supported by any organization}
\thanks{S. Kataoka, K. Teranishi, and Y. Fujisaki are with the Department of Information and Physical Sciences, Graduate School of Information Science and Technology, The University of Osaka, 1-5 Yamadaoka, Suita, Osaka 565-0871, Japan {\tt\small \{shinji-kataoka, k-teranishi, fujisaki\}@ist.osaka-u.ac.jp}.}%
}

\begin{document}

\maketitle
\thispagestyle{empty}
\pagestyle{empty}

\begin{abstract}
In this study, we show that a continuous-time risk-sensitive control problem is equivalent to a R{\'e}nyi divergence minimization problem over trajectory path measures.
Reformulating stochastic optimal control as probabilistic inference via Kullback-Leibler (KL) divergence minimization avoids the computational intractability of the Hamilton-Jacobi-Bellman equation.
However, standard KL control is inherently risk-neutral, and recent minimax extensions remain restricted to risk-averse settings.
Our equivalence result resolves this limitation by offering a unified probabilistic framework for arbitrary risk attitudes in continuous-time nonlinear systems.
Based on Girsanov theorem, we explicitly map the risk sensitivity to the R{\'e}nyi divergence order, deriving a noise-dependent control penalty scaled by risk preference.
This formulation seamlessly modulates tail-weighting behaviors, interpolating between zero-forcing for risk-averse policies and mass-covering for risk-seeking policies.
These findings bridge stochastic control and information-theoretic inference, providing a foundation for sampling-based control algorithms. 
\end{abstract}

\section{Introduction}
\label{sec:introduction}

Optimal control of nonlinear stochastic systems presents a fundamental challenge in managing complex dynamical processes.
Because physical systems operating in real-world environments are subject to stochastic disturbances, designing control laws that minimize the expected cost over uncertain system trajectories is essential to guarantee both stability and performance~\cite{fleming_deterministic_1975}.
Theoretically, such optimal control laws are characterized by the Hamilton-Jacobi-Bellman equation.
However, this nonlinear partial differential equation is often analytically intractable.
To overcome this limitation, alternative frameworks reformulate stochastic optimal control as sampling-based control~\cite{kappen_linear_2005,Todorov2009-on,theodorou_relative_2012}.

These frameworks leverage information-theoretic metrics to measure the deviation of the controlled system from a desired reference behavior.
For instance, Kullback-Leibler (KL) control employs the KL divergence between the probability measure over controlled trajectories and that over reference trajectories~\cite{Todorov2009-on,theodorou_relative_2012,theodorou_nonlinear_2015}.
KL control admits a path integral representation of the optimal control input, thereby enabling its computation via Monte Carlo sampling~\cite{kazim_recent_2024}.
Under certain structural assumptions, this formulation is equivalent to conventional stochastic optimal control.

Although KL control enables efficient computation of optimal control inputs, it is inherently risk-neutral and does not explicitly account for extreme risks or tail events.
To address this limitation, extending this framework to risk-sensitive control~\cite{Jacobson1973-iy,fleming_risk_2006,Fleming2005-pw} has attracted significant interest for handling unexpected disturbances and worst-case scenarios.
For example, safety-critical applications such as autonomous driving and medical robotics require risk-averse control to avoid high-cost failures under severe disturbances~\cite{ahmadi_risk_2022,chapman_risk-sensitive_2022}.
Conversely, tasks such as exploration in unknown environments or financial decision-making may benefit from risk-seeking behavior, which accepts potential risks to achieve higher returns~\cite{nass_entropic_2019,sarfi_risk-sensitive_2025,berkelaar_optimal_2004}.
Therefore, an information-theoretic control framework that can flexibly accommodate different risk sensitivities based on task requirements is desirable.

A recent study introduced a minimax KL control framework that extends KL control to a two-player zero-sum game setting~\cite{patil_path_2025}.
In this setup, an adversarial player attempts to maximize the cost while the controller minimizes it under worst-case disturbances.
Although the authors demonstrated that this minimax control is equivalent to risk-averse control, their framework inherently cannot accommodate risk-seeking scenarios.
Another promising approach is to employ the R{\'e}nyi divergence instead of the KL divergence.
As a generalization of the KL divergence, the R{\'e}nyi divergence incorporates a tunable parameter to adjust the emphasis placed on different regions of the probability distribution~\cite{van_erven_renyi_2014}.
In machine learning, this flexibility has been leveraged in variational inference~\cite{li_renyi_2016}.
Furthermore, the R{\'e}nyi divergence has been applied to the control-as-inference framework, establishing the equivalence between R{\'e}nyi-divergence-based control as inference and discrete-time risk-sensitive control~\cite{ito_risk-sensitive_2024}.
However, this framework has not yet been extended to continuous-time systems.

To bridge this gap, the primary objective of this study is to integrate R{\'e}nyi variational inference into stochastic optimal control.
Specifically, for continuous-time nonlinear stochastic systems, we establish the equivalence between the risk-sensitive control problem and the R{\'e}nyi divergence minimization problem.
By exploiting the properties of the R{\'e}nyi divergence, our proposed framework incorporates a tunable parameter that seamlessly accommodates both risk-averse and risk-seeking policies within a unified probabilistic formulation.

The main contributions of this study are summarized as follows:
\begin{itemize}
    \item We establish an equivalence between continuous-time risk-sensitive control and probabilistic inference using R{\'e}nyi divergence.
    \item We provide a unified information-theoretic framework for both risk-averse and risk-seeking control, overcoming the structural restriction to risk-averse settings present in existing minimax KL control.
    \item We provide a control-theoretic perspective on R{\'e}nyi variational inference, establishing a direct mapping between geometric approximation behaviors and control risk attitudes.
\end{itemize}

The remainder of this paper is organized as follows.
\secref{sec:problem_formulation} introduces a continuous-time nonlinear stochastic system and formulates both the risk-sensitive control and R{\'e}nyi divergence minimization problems.
\secref{sec:main_result} presents the main results, providing the proof of equivalence between the two formulations.
Finally, \secref{sec:conclusion} concludes the paper and discusses directions for future work.

\section{Problem formulation}
\label{sec:problem_formulation}

\subsection{Notation}

Let $\mathbb{R}$ denote the set of real numbers.
For a vector $v \in \mathbb{R}^n$, its Euclidean norm is denoted by $\norm{v}$.
For a matrix $M \in \mathbb{R}^{m \times n}$, its pseudoinverse is denoted by $M^+$.
Let $(\Omega, \mathcal{F})$ be a measurable space.
For probability measures $P$ and $Q$ on $(\Omega, \mathcal{F})$, we say that $P$ is absolutely continuous with respect to $Q$ if, for every $A \in \mathcal{F}$, $Q(A) = 0$ implies $P(A) = 0$.
Furthermore, $P$ and $Q$ are said to be equivalent if they are mutually absolutely continuous.
If $P$ is absolutely continuous with respect to $Q$, then by the Radon-Nikodym theorem, there exists an $\mathcal{F}$-measurable non-negative function $Z$ such that $dP = Z dQ$.
This function $Z$ is called the Radon-Nikodym derivative of $P$ with respect to $Q$ and is denoted by $dP / dQ$.
The expectation of a random variable $X$ under $P$ is denoted by $\mathbb{E}^P[X]$.
The KL divergence of $P$ with respect to $Q$ is defined as $D(P \parallel Q) \coloneqq \mathbb{E}^P [ \log dP / dQ ]$ if the Radon-Nikodym derivative $dP / dQ$ exists, and $D(P \parallel Q) \coloneqq +\infty$ otherwise.

\subsection{System Description and Assumptions}

Consider a continuous-time nonlinear stochastic system described by
\begin{equation}
    dx_t = f(t, x_t) dt + g(t, x_t)u_t dt + h(t, x_t) dw_t,
    \label{eq:dynamics}
\end{equation}
where $x_0=x$, $x_t \in \mathbb{R}^n$ is the state, $u_t \in \mathbb{R}^m$ is the control input, $w_t \in \mathbb{R}^r$ is a standard Brownian motion, $f_t: \mathbb{R}^n \to \mathbb{R}^n$, $g_t: \mathbb{R}^n \to \mathbb{R}^{n \times m}$, and $h_t: \mathbb{R}^n \to \mathbb{R}^{n \times r}$.
Throughout this paper, we assume the existence and uniqueness of a solution to \eqref{eq:dynamics} over the finite time interval $[0, T]$.
We denote the state trajectory of \eqref{eq:dynamics} over $[0, T]$ by $x_{0:T}$ and its corresponding path measure by $P$.
For system \eqref{eq:dynamics}, the uncontrolled dynamics (i.e., with $u_t = 0$) is given by
\begin{equation}
    dx_t = f(t, x_t) dt + h(t, x_t) dw_t,
    \label{eq:dynamics_0}
\end{equation}
where $x_0=x$.
We denote the path measure of the state trajectory for this system by $Q$.

We next state two assumptions on system \eqref{eq:dynamics} to ensure the validity of the change of measure between the controlled and uncontrolled path measures.
The first assumption introduces structural conditions on the drift and diffusion coefficients of the system.

\begin{assumption}\label{ass:col_space}
    For all $t \in [0, T]$ and $x \in \mathbb{R}^n$, the following conditions hold:
    \begin{itemize}
        \item The column spaces of the input matrix $g(t, x)$ and the diffusion matrix $h(t, x)$ are identical.
        \item The input matrix $g(t, x)$ has full column rank.
    \end{itemize}
\end{assumption}

It follows from this assumption that for all $t \in [0, T]$ and $x \in \mathbb{R}^n$, given any control vector $u \in \mathbb{R}^m$, there exists a corresponding vector $\gamma \in \mathbb{R}^r$ satisfying $g(t, x) u = h(t, x) \gamma$.
In particular, one such vector $\gamma$ is explicitly given by $\gamma = h(t, x)^+ g(t, x) u$ because $h(t, x) \gamma = (h(t, x) h(t, x)^+) g(t, x) u = g(t, x) u$, where $h(t, x) h(t, x)^+$ is the orthogonal projection onto the column space of $h(t, x)$.
In practice, this implies that the noise and the control input act on the system dynamics through identical channels.
This property guarantees that any control input can be represented as a drift in the underlying noise process.

With the control input formulated as a drift $\gamma_t = h(t, x_t)^+ g(t, x_t) u_t$, the validity of the measure change requires the corresponding exponential local martingale to be a true martingale.
In stochastic calculus, this requirement is ensured by an exponential integrability condition known as Novikov's condition~\cite{Oksendal1995-hs}.
The second assumption introduces this condition for the drift term $\gamma_t$, thereby justifying the Radon-Nikodym derivative.

\begin{assumption}
\label{ass:input}
    For system \eqref{eq:dynamics}, the control input $u_t$ satisfies, for every $k \in (0, \infty)$,
    \begin{equation}
        \mathbb{E}^Q \qty[ \exp( \frac{k}{2} \int_0^T u_t^{\top} g_t^{\top} (h_t h_t^{\top})^+ g_t u_t dt ) ] < \infty,
        \label{eq:input_condition}
    \end{equation}
    where $g_t = g(t, x_t)$ and $h_t = h(t, x_t)$.
\end{assumption}

Under this integrability condition, Girsanov theorem~\cite{Oksendal1995-hs} guarantees a valid measure change.
In other words, the controlled path measure $P$ is equivalent to the uncontrolled path measure $Q$.
From a control perspective, this measure equivalence excludes unrealistic control inputs that would inject infinite energy into the system over a finite time horizon.
If an excessive control input violates this condition, absolute continuity between the path measures is lost, rendering the probabilistic evaluation via Girsanov theorem inapplicable.

\subsection{Risk-sensitive control}

Building on the system dynamics defined above, this section introduces the risk-sensitive control problem~\cite{Jacobson1973-iy,fleming_risk_2006}.
For system \eqref{eq:dynamics}, let $c(t, x_t, u_t) \ge 0$ denote the running cost at time $t \in [0, T]$.
In a standard optimal control problem, we minimize the expected cumulative cost.
This risk-neutral approach evaluates costs linearly and often underestimates rare, extreme trajectories.
In contrast, risk-sensitive control minimizes the logarithm of the expected exponential cumulative cost, formulated as follows.

\begin{problem}[Risk-Sensitive Control]
\label{prob:risk_sensitive}
    \begin{alignat}{2}
        &\min_{P} &\quad &\frac{1}{\theta} \log \mathbb{E}^{P} \qty[ \exp( \theta \int_0^T c(t, x_t, u_t) dt ) ] \label{eq:risk_sensitive_objective} \\
        &\,\mathrm{s.t.} &\quad &\eqref{eq:dynamics}, \ \theta > -1, \ \theta \neq 0
    \end{alignat}
\end{problem}

Note that the risk-sensitive formulation recovers the standard risk-neutral optimal control problem
\begin{alignat}{2}
    &\min_{P} &\quad &\mathbb{E}^{P} \qty[ \int_0^T c(t, x_t, u_t) dt ] \\
    &\,\text{s.t.} &\quad &\eqref{eq:dynamics}
\end{alignat}
in the limit as $\theta \to 0$.
The logarithmic and exponential transformations introduce a sensitivity to higher-order moments of the cumulative cost, establishing an explicit trade-off between its expectation and variance.
Applying a Taylor expansion with respect to $\theta$ around zero yields
\begin{align}
    &\frac{1}{\theta} \log \mathbb{E}^{P} \qty[ \exp( \theta \int_0^T c(t, x_t, u_t) dt ) ] \\
    &\approx \mathbb{E}^{P} \qty[ \int_0^T c(t, x_t, u_t) dt ] + \frac{\theta}{2} \mathbb{V}^{P} \qty[ \int_0^T c(t, x_t, u_t) dt ],
\end{align}
where $\mathbb{V}^{P}$ denotes the variance under path measure $P$.
Here, the parameter $\theta$ determines the degree of risk sensitivity.
When $\theta > 0$, the objective penalizes the cost variance, resulting in risk-averse behavior that prioritizes suppressing high-cost tail events.
Conversely, when $\theta < 0$, the system exhibits risk-seeking behavior that tolerates worst-case scenarios in pursuit of lower expected costs.

\subsection{R{\'e}nyi divergence minimization}

The goal of this study is to establish an equivalence between the risk-sensitive control problem in \probref{prob:risk_sensitive} and a specific variational inference problem.
To this end, this section introduces the R{\'e}nyi divergence and formulates a minimization problem based on the divergence of the controlled path measure from a target path measure.
The R{\'e}nyi divergence, a generalization of the KL divergence, is an information-theoretic quantity defined as a pseudo-distance between two probability measures.

\begin{definition}[R{\'e}nyi divergence~\cite{van_erven_renyi_2014}]
\label{def:renyi_divergence}
    Let $P$ and $Q$ be probability measures such that $P$ is absolutely continuous with respect to $Q$.
    The R{\'e}nyi divergence of order $\alpha$ of $P$ with respect to $Q$ is defined as
    \begin{equation}
        D_\alpha(P \parallel Q) \coloneqq \frac{1}{\alpha - 1} \log \mathbb{E}^P \qty[ \qty( \frac{dP}{dQ} )^{\alpha - 1} ],
        \label{eq:def_renyi_divergence}
    \end{equation}
    where $\alpha > 0$ and $\alpha \neq 1$.
\end{definition}

Note that in the limit as $\alpha \to 1$, the R{\'e}nyi divergence converges to the KL divergence~\cite{van_erven_renyi_2014}, namely
\begin{equation}
    \lim_{\alpha \to 1} D_\alpha(P \parallel Q) = D(P \parallel Q).
\end{equation}
The order parameter $\alpha$ adjusts the sensitivity to the distribution tails when quantifying the dissimilarity between probability measures.
\figref{fig:renyi_comparison} depicts three normal distributions: $\mathcal{N}(0, 1)$ (blue), $\mathcal{N}(2, 1)$ (red), and $\mathcal{N}(3, 1)$ (orange).
Let $P$, $Q$, and $R$ be the Gaussian measures corresponding to these distributions.
\tabref{tab:renyi_values} then lists the R{\'e}nyi divergence values of $P$ relative to $Q$ and $R$ for $\alpha \in \{0.1, 0.5, 1.0, 2.0, 3.0\}$.

Comparing $Q$ and $R$, we observe that $D_\alpha(P \parallel Q) < D_\alpha(P \parallel R)$ holds for all evaluated $\alpha$.
Furthermore, the difference between $D_\alpha(P \parallel R)$ and $D_\alpha(P \parallel Q)$ increases as $\alpha$ increases.
This indicates that a larger spatial separation between distributions yields a higher divergence, and this penalty is magnified by larger values of $\alpha$.

To understand how the order $\alpha$ determines which area of the distribution is emphasized, we focus on $D_\alpha(P \parallel Q)$.
The divergence calculation evaluates the expectation $\mathbb{E}^P [ ( dP / dQ )^{\alpha - 1} ]$, where the Radon-Nikodym derivative term weights different spatial regions under the measure $P$.
For these specific Gaussian measures, setting $\alpha = 0.5$ emphasizes the overlapping region between $P$ and $Q$.
As $\alpha$ increases, the emphasized region shifts toward the left tail of $P$, where the measure $Q$ lacks probability mass.
Consequently, increasing $\alpha$ heightens the sensitivity to low-probability tail deviations.

\begin{figure}[t]
    \centering
    \includegraphics[scale=1]{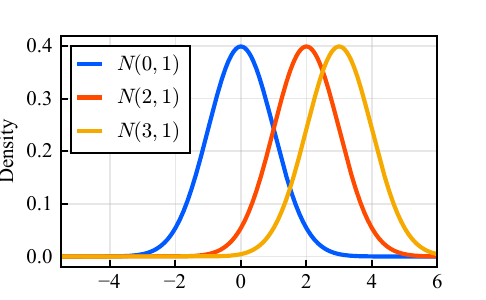}
    \caption{Normal distributions with different means and variances.}
    \label{fig:renyi_comparison}
\end{figure}

\begin{table}[t]
    \caption{R{\'e}nyi divergences for the Gaussian measures.}
    \label{tab:renyi_values}
    \centering
    \begin{tabular}{S[table-format=1.1] S[table-format=1.2] S[table-format=2.2]}
        \toprule
        {$\alpha$} & {$D_\alpha(P \parallel Q)$} & {$D_\alpha(P \parallel R)$} \\
        \midrule
        0.1 & 0.20 & 0.45 \\
        0.5 & 1.00 & 2.25 \\
        1.0 & 2.00 & 4.50 \\
        2.0 & 4.00 & 9.00 \\
        3.0 & 6.00 & 13.50 \\
        \bottomrule
    \end{tabular}
\end{table}

Using the R{\'e}nyi divergence, we formulate a minimization problem.
In what follows, we assume that the running cost is given by
\begin{equation}
    c(t, x_t, u_t) \coloneqq \ell(t, x_t) + \frac{1}{2} u_t^{\top} R_t u_t,
    \label{eq:running_cost}
\end{equation}
where the first term $\ell: [0, T] \times \mathbb{R}^n \to \mathbb{R}$ is a nonnegative state cost, and the second term is a quadratic control cost with a positive-definite weight matrix $R_t \in \mathbb{R}^{m \times m}$.
Moreover, we define the total state cost over the time interval $[0, T]$ as
\begin{equation}
    C(x_{0:T}) \coloneqq \int_0^T \ell(t, x_t) dt.
    \label{eq:total_cost}
\end{equation}

Using this total state cost, we define the reference path measure $Q^\ast$ by
\begin{equation}
    dQ^\ast \coloneqq \frac{1}{Z} \exp(-\frac{1}{\lambda}C(x_{0:T})) dQ, \quad \lambda > 0,
    \label{eq:def_measure_Q_star}
\end{equation}
where $Q$ is the path measure for the system \eqref{eq:dynamics_0}, and $Z = \mathbb{E}^Q[\exp(-(1/\lambda)C(x_{0:T}))]$ is a normalization constant.
Since the total state cost $C(x_{0:T})$ is given by the integral of the nonnegative cost $\ell(t, x_t)$, the cost satisfies $C(x_{0:T}) \ge 0$, which guarantees that the constant $Z$ is finite.
Consequently, the reference measure $Q^\ast$ is equivalent to the uncontrolled path measure $Q$, and their Radon-Nikodym derivative is given by
\begin{equation}
    \frac{dQ}{dQ^\ast} = Z \exp(\frac{1}{\lambda}C(x_{0:T})).
\end{equation}

In the context of KL control, the reference path measure $Q^\ast$ is interpreted as the minimizer of the free energy associated with the total state cost $C(x_{0:T})$ relative to the uncontrolled path measure $Q$~\cite{theodorou_relative_2012}.
Furthermore, the normalization constant $Z$ corresponds to the partition function in this statistical mechanics analogy.
Under this framework, the total state cost $C(x_{0:T})$ plays the role of energy, making lower-cost trajectories exponentially more probable.

Having established the reference path measure $Q^\ast$, we now formally define the R{\'e}nyi divergence minimization problem considered in this study.

\begin{problem}[R{\'e}nyi divergence minimization]
\label{prob:renyi}
    \begin{alignat}{2}
        &\min_{P} &\quad &D_\alpha(P \parallel Q^\ast) \label{eq:renyi_objective} \\
        &\,\mathrm{s.t.} &\quad &\eqref{eq:dynamics}, \ \eqref{eq:dynamics_0}, \ \eqref{eq:def_measure_Q_star}, \ \alpha > 0, \ \alpha \neq 1
    \end{alignat}
\end{problem}

This formulation seeks the optimal controlled path measure $P$ for the system \eqref{eq:dynamics} that minimizes its R{\'e}nyi divergence relative to the reference path measure $Q^\ast$.
Specifically, attaining $D_\alpha(P \parallel Q^\ast) = 0$ implies that the controlled path measure coincides with the target measure $Q^\ast$.
Here, the order parameter $\alpha$ controls the tail weighting relative to the target measure $Q^\ast$, and thus its value is directly related to the risk sensitivity $\theta$ in \probref{prob:risk_sensitive}.
When $\alpha > 1$, the formulation yields risk-averse control, whereas $0 < \alpha < 1$ yields risk-seeking control.
The following section formally shows the equivalence between \probref{prob:risk_sensitive} and \probref{prob:renyi}, and clarifies the relationship between the parameters $\alpha$ and $\theta$.

\section{Main Results}
\label{sec:main_result}

In this section, we establish the equivalence between the risk-sensitive control problem and the R{\'e}nyi divergence minimization problem.
Our main result is formally presented in the following theorem.

\begin{theorem}
    Consider the risk-sensitive control problem in \probref{prob:risk_sensitive} and the R{\'e}nyi divergence minimization problem in \probref{prob:renyi}.
    Under \asmref{ass:col_space} and \asmref{ass:input}, if
    \begin{equation}
        R_t = \frac{\lambda}{\alpha} g_t^{\top} \qty( h_t h_t^{\top} )^+ g_t \quad \text{and} \quad \theta = \frac{\alpha - 1}{\lambda}
    \end{equation}
    hold, then the two problems are equivalent, where $g_t = g(t, x_t)$ and $h_t = h(t, x_t)$.
\end{theorem}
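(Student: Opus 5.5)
The plan is to compute $D_\alpha(P \parallel Q^\ast)$ explicitly for an arbitrary admissible control $u$. The goal is to show that it equals $\log Z$ plus $1/\lambda$ times the risk-sensitive objective \eqref{eq:risk_sensitive_objective}, evaluated at the rescaled control $\tilde u_t = \alpha u_t$. Since $u \mapsto \alpha u$ is a bijection of the admissible class, and $\log Z$ and $\lambda>0$ do not depend on $P$, the minimizers of \probref{prob:renyi} and \probref{prob:risk_sensitive} then correspond one-to-one, which is the claimed equivalence.

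\emph{Step 1: the density $dP/dQ^\ast$ and the R\'enyi integrand.} By \asmref{ass:col_space}, I write $g_t u_t = h_t \gamma_t$ with $\gamma_t = h_t^+ g_t u_t$. Using $(h_t^+)^\top h_t^+ = (h_t h_t^\top)^+$, we get $\norm{\gamma_t}^2 = u_t^\top g_t^\top (h_t h_t^\top)^+ g_t u_t$. By \asmref{ass:input} and Girsanov's theorem, $P \sim Q$ and
\begin{equation}
\log \frac{dP}{dQ} = \int_0^T \gamma_t^\top dw_t + \frac12 \int_0^T \norm{\gamma_t}^2 dt,
\end{equation}
where $w$ is the $P$-Brownian motion driving \eqref{eq:dynamics}. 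Combining this with $dQ/dQ^\ast = Z \exp(C/\lambda)$ gives
\begin{equation}
\Big(\frac{dP}{dQ^\ast}\Big)^{\alpha-1} = Z^{\alpha-1} \exp\Big( \frac{\alpha-1}{\lambda} C + \frac{\alpha-1}{2}\int_0^T \norm{\gamma_t}^2 dt + (\alpha-1)\int_0^T \gamma_t^\top dw_t \Big).
\end{equation}

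\emph{Step 2: remove the stochastic integral by a second Girsanov change.} Define $\tilde P$ by
\begin{equation}
\frac{d\tilde P}{dP} = \exp\Big( (\alpha-1)\int_0^T \gamma_t^\top dw_t - \frac{(\alpha-1)^2}{2}\int_0^T \norm{\gamma_t}^2 dt \Big).
\end{equation}
Then
\begin{equation}
\mathbb{E}^P\Big[\Big(\frac{dP}{dQ^\ast}\Big)^{\alpha-1}\Big] = Z^{\alpha-1}\,\mathbb{E}^{\tilde P}\Big[ \exp\Big( \frac{\alpha-1}{\lambda}\Big( C + \frac{\lambda\alpha}{2}\int_0^T \norm{\gamma_t}^2 dt \Big)\Big)\Big],
\end{equation}
because $\tfrac{\alpha-1}{2} + \tfrac{(\alpha-1)^2}{2} = \tfrac{\alpha(\alpha-1)}{2}$.

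Under $\tilde P$, the process $\tilde w_t = w_t - (\alpha-1)\int_0^t \gamma_s ds$ is a Brownian motion. Hence
\begin{equation}
dx_t = f\,dt + g_t u_t\,dt + (\alpha-1) h_t\gamma_t\,dt + h_t\,d\tilde w_t = f\,dt + g_t(\alpha u_t)\,dt + h_t\,d\tilde w_t,
\end{equation}
so $\tilde P$ is exactly the path measure of \eqref{eq:dynamics} driven by $\tilde u = \alpha u$.

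Rewriting $\tfrac{\lambda\alpha}{2}\norm{\gamma_t}^2$ in terms of $\tilde u_t$ gives $\tfrac{\lambda}{2\alpha}\, \tilde u_t^\top g_t^\top (h_t h_t^\top)^+ g_t \tilde u_t = \tfrac12 \tilde u_t^\top R_t \tilde u_t$, with $R_t$ as in the theorem. Therefore, with $\theta = (\alpha-1)/\lambda$,
\begin{equation}
D_\alpha(P\parallel Q^\ast) = \log Z + \frac{1}{\lambda}\cdot\frac{1}{\theta}\log \mathbb{E}^{\tilde P}\Big[\exp\Big(\theta\int_0^T c(t,x_t,\tilde u_t)\,dt\Big)\Big].
\end{equation}

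\emph{Step 3: conclude and handle the main obstacle.} The main obstacle is rigor in Step 2. First, $d\tilde P/dP$ must be a true martingale density; this follows from Novikov's condition with exponent $(\alpha-1)^2$. Second, the rescaled control $\alpha u$ must again satisfy \asmref{ass:input}. Third, these integrability statements must transfer between $Q$ and $P$. My first attempt would use the "for every $k$" clause of \asmref{ass:input} together with Cauchy--Schwarz (H\"older) to bound the needed $P$-expectations by $Q$-expectations with a larger $k$. The admissible class is invariant under $u \mapsto \alpha u$ because $\norm{\gamma(\alpha u)}^2 = \alpha^2\norm{\gamma(u)}^2$, and $k$ is arbitrary.

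A second point to check is that state-feedback or path-dependent controls keep the same functional form under the change of measure, so that $\tilde P$ is genuinely a controlled law of \eqref{eq:dynamics}; this holds because $u_t$ is a functional of the path $x$, which is unchanged by the measure change. Finally, the ranges of the parameters are not identical: $\alpha>0$ gives $\theta \in (-1/\lambda,\infty)\setminus\{0\}$. I would note that this range is the one on which the stated correspondence holds.
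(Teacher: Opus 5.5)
Your proposal is correct and follows essentially the same route as the paper's proof: Girsanov for $dP/dQ$ with $\gamma_t = h_t^+ g_t u_t$, the chain rule through $dQ/dQ^\ast = Z\exp(C/\lambda)$, completing the square to split off the exponential martingale with drift $(\alpha-1)\gamma_t$, and identifying $\tilde P$ as the law of the controlled system driven by $\tilde u_t = \alpha u_t$, with $\tfrac{\lambda\alpha}{2}\norm{\gamma_t}^2 = \tfrac12\tilde u_t^\top R_t\tilde u_t$. Your additions are sound and cover points the paper leaves implicit: the exact affine identity $D_\alpha(P\parallel Q^\ast) = \log Z + \tfrac{1}{\lambda}(\cdot)$ in place of the paper's loose ``$\propto$'', the H\"older transfer of Novikov's condition from $Q$ to $P$ via the ``for every $k$'' clause, the invariance of the admissible class under $u\mapsto\alpha u$, and the observation that the parameter range is really $\theta > -1/\lambda$ rather than $\theta > -1$.
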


\begin{proof}
    Note that \asmref{ass:input} guarantees that $R_t$ is positive definitie for all $t \in [0, T]$.
    In what follows, we write $f_t = f(t, x_t)$, $g_t = g(t, x_t)$, $h_t = h(t, x_t)$, and $C = C(x_{0:T})$ to simplify the notation.
    Furthermore, to explicitly indicate that the Brownian motions in \eqref{eq:dynamics} and \eqref{eq:dynamics_0} are defined under their respective measures $P$ and $Q$, we denote them by $w_t^P$ and $w_t^Q$.

    First, we evaluate the Radon-Nikodym derivative $dP/dQ^\ast$ associated with the objective function in \eqref{eq:renyi_objective}.
    Since both path measures $P$ and $Q^\ast$ are absolutely continuous with respect to the uncontrolled path measure $Q$, we can apply the chain rule for Radon-Nikodym derivatives,
    \begin{equation}
        \frac{dP}{dQ^\ast} = \frac{dP}{dQ} \frac{dQ}{dQ^\ast}.
    \end{equation}
    From the definition of the reference measure $Q^\ast$ in \eqref{eq:def_measure_Q_star}, we have
    \begin{equation}
        \frac{dQ}{dQ^\ast} = Z \exp(\frac{1}{\lambda}C).
    \end{equation}
    Substituting this into the chain rule formula yields
    \begin{equation}
        \frac{dP}{dQ^\ast} = Z \exp(\frac{1}{\lambda}C) \frac{dP}{dQ} \propto \exp(\frac{1}{\lambda}C) \frac{dP}{dQ}.
        \label{eq:dP_dQ_star}
    \end{equation}
    Because the normalization constant $Z$ is independent of the control input $u_t$, it is inherently independent of the controlled path measure $P$.
    This implies that $Z$ does not affect the minimization in \probref{prob:renyi} and can be omitted from the optimization problem.

    Next, we characterize the Radon-Nikodym derivative $dP/dQ$ in \eqref{eq:dP_dQ_star} by leveraging the system dynamics.
    Under the uncontrolled path measure $Q$, the system trajectory is governed by the standard Brownian motion $w_t^Q$, whereas the control input $u_t$ introduces the drift term $g_t u_t dt$.
    To connect the controlled path measure $P$ with $Q$, we project the drift $g_t u_t dt$ into the stochastic noise space.
    From \asmref{ass:col_space}, we can choose the scaled drift process
    \begin{equation}
        \gamma_t \coloneqq h_t^+ g_t u_t
        \label{eq:gamma_t}
    \end{equation}
    such that $h_t \gamma_t = g_t u_t$.
    Then, under \asmref{ass:input}, the exponential process defined by
    \begin{equation}
        M_t \coloneqq \exp( \int_0^t \gamma_s^{\top} dw_s^Q - \frac{1}{2} \int_0^t \norm{ \gamma_s }^2 ds ), \quad t \in [0, T]
        \label{eq:def_M_t}
    \end{equation}
    becomes a true martingale under $Q$, ensuring $\mathbb{E}^Q[M_T] = 1$.
    By Girsanov theorem, the Radon-Nikodym derivative of $P$ with respect to $Q$ is therefore given by
    \begin{equation}
        \frac{dP}{dQ} = M_T.
    \end{equation}
    Substituting this result into \eqref{eq:dP_dQ_star} yields
    \begin{equation}
        \qty( \frac{dP}{dQ^\ast} )^{\alpha-1} \propto M_T^{\alpha-1} \exp( \frac{\alpha-1}{\lambda} C ).
        \label{eq:dP_dQ_alpha_1}
    \end{equation}
    
    Again, by Girsanov theorem, the standard Brownian motion under $Q$ can be expressed as $dw_t^Q = \gamma_t dt + dw_t^P$.
    Substituting this into $M_T^{\alpha-1}$ in \eqref{eq:dP_dQ_alpha_1} yields
    \begin{align}
        &M_T^{\alpha-1} = \exp( \int_0^T \gamma_s^{\top} dw_s^P + \frac{1}{2} \int_0^T \norm{ \gamma_s }^2 ds )^{\alpha-1}, \\
        &= \exp( (\alpha-1) \int_0^T \gamma_s^{\top} dw_s^P + \frac{\alpha-1}{2} \int_0^T \norm{ \gamma_s }^2 ds ).
    \end{align}
    To simplify the exponent of $M_T^{\alpha-1}$, we define the modified drift process
    \begin{equation}
        \tilde{\gamma}_t \coloneqq (\alpha - 1) \gamma_t,
    \end{equation}
    along with its corresponding exponential process under $P$:
    \begin{equation}
        \tilde{M}_t \coloneqq \exp( \int_0^t \tilde{\gamma}_s^{\top} dw_s^P - \frac{1}{2} \int_0^t \norm{ \tilde{\gamma}_s }^2 ds ), \quad t \in [0, T].
    \end{equation}
    Completing the square in the exponent to isolate $\tilde{M}_T$, we obtain
    \begin{equation}
        M_T^{\alpha-1} = \tilde{M}_T \exp( \frac{\alpha(\alpha-1)}{2} \int_0^T \norm{ \gamma_s }^2 ds ),
        \label{eq:M_T_alpha_1}
    \end{equation}
    where
    \begin{align}
        &(\alpha-1) \int_0^T \gamma_s^{\top} dw_s^P + \frac{\alpha-1}{2} \int_0^T \norm{ \gamma_s }^2 ds  \\
        &= \int_0^T \tilde{\gamma}_s^{\top} dw_s^P - \frac{(\alpha-1)^2}{2} \int_0^T \norm{ \gamma_s }^2 ds, \\
        &\quad + \frac{(\alpha-1)^2}{2} \int_0^T \norm{ \gamma_s }^2 ds + \frac{\alpha-1}{2} \int_0^T \norm{ \gamma_s }^2 ds, \\
        &= \int_0^T \tilde{\gamma}_s^{\top} dw_s^P - \frac{1}{2} \int_0^T \norm{ \tilde{\gamma}_s }^2 ds + \frac{\alpha(\alpha-1)}{2} \int_0^T \norm{ \gamma_s }^2 ds,
    \end{align}
    and the first two terms of the last equality correspond to the exponent of $\tilde{M}_T$.

    We now introduce a new path measure $\tilde{P}$ defined by $d\tilde{P} \coloneqq \tilde{M}_T dP$.
    To determine the system dynamics under $\tilde{P}$, we observe from Girsanov theorem that the standard Brownian motion under this new measure is governed by $dw_t^{\tilde{P}} = -\tilde{\gamma}_t dt + dw_t^P$.
    Using this relation, the original system dynamics \eqref{eq:dynamics} can be rewritten under $\tilde{P}$ as
    \begin{align}
        dx_t
        &= f_t dt + g_t u_t dt + h_t dw_t^{\tilde{P}} + h_t \tilde{\gamma}_t dt, \\
        &= f_t dt + g_t u_t dt + h_t dw_t^{\tilde{P}} + (\alpha - 1) g_t u_t dt, \\
        &= f_t dt + g_t \tilde{u}_t dt + h_t dw_t^{\tilde{P}}, \label{eq:dynamics_tilde_P}
    \end{align}
    where $x_0 = x$ and $\tilde{u}_t = \alpha u_t$.
    Under the scaled control input $\tilde{u}_t$, the squared norm of the process $\gamma_t$ in \eqref{eq:gamma_t} satisfies
    \begin{equation}
        \norm{ \gamma_t }^2 = \frac{1}{\alpha^2} \tilde{u}_t^{\top} g_t^{\top}(h_th_t^{\top})^+ g_t \tilde{u}_t = \frac{1}{\alpha\lambda} \tilde{u}_t^{\top} R_t \tilde{u}_t.
        \label{eq:norm_gamma_t}
    \end{equation}
    Furthermore, from \asmref{ass:input}, because the process $\tilde{M}_t$ is a true martingale under $P$, expectations under $\tilde{P}$ and $P$ are related by $\mathbb{E}^P [\tilde{M}_T (\cdot)] = \mathbb{E}^{\tilde{P}} [\cdot]$.
    Applying this identity alongside \eqref{eq:M_T_alpha_1}, we express the expectation of \eqref{eq:dP_dQ_alpha_1} as
    \begin{align}
        &\mathbb{E}^P \qty[ \qty( \frac{dP}{dQ^\ast} )^{\alpha-1} ] \\
        &\propto \mathbb{E}^P \qty[ M_T^{\alpha-1} \exp( \frac{\alpha-1}{\lambda}C ) ], \\
        &= \mathbb{E}^{\tilde{P}} \qty[ \exp( \frac{\alpha-1}{\lambda}C + \frac{\alpha(\alpha-1)}{2} \int_0^T \norm{ \gamma_s }^2 ds ) ], \\
        &= \mathbb{E}^{\tilde{P}} \qty[ \exp( \frac{\alpha-1}{\lambda}C + \frac{\alpha-1}{2\lambda} \int_0^T \tilde{u}_s^{\top} R_s \tilde{u}_s ds ) ], \\
        &=  \mathbb{E}^{\tilde{P}} \qty[ \exp{ \theta \qty( C + \frac{1}{2} \int_0^T \tilde{u}_s^{\top} R_s \tilde{u}_s ds ) } ]. \label{eq:Expectation_P_tildeP}
    \end{align}
    Therefore, from \eqref{eq:running_cost} and \eqref{eq:total_cost}, the objective function in \eqref{eq:renyi_objective} satisfies
    \begin{align}
        &D_\alpha(P \parallel Q^\ast) \\
        &\propto \frac{1}{\theta} \log \mathbb{E}^{\tilde{P}} \qty[ \exp{ \theta \qty( C + \frac{1}{2} \int_0^T \tilde{u}_t^{\top} R_t \tilde{u}_t dt ) } ], \\
        &= \frac{1}{\theta} \log \mathbb{E}^{\tilde{P}} \qty[ \exp( \theta \int_0^T c(t, x_t, \tilde{u}_t) dt ) ].
    \end{align}
    Since the final expression coincides with the risk-sensitive control objective in \eqref{eq:risk_sensitive_objective}, the equivalence of the two problems is established.
    This completes the proof.
\end{proof}

This theorem establishes the equivalence between \probref{prob:risk_sensitive} and \probref{prob:renyi}.
Given a risk-sensitive control problem satisfying \asmref{ass:col_space} and \asmref{ass:input}, if the matching condition,
\begin{equation}
    h(t, x_t) h(t, x_t)^\top = \frac{\lambda}{1 + \lambda \theta} \, g(t, x_t) R_t^{-1} g(t, x_t)^\top,
\end{equation}
holds for all $t \in [0, T]$ with some $\lambda > 0$ satisfying $1 + \lambda \theta > 0$, then the corresponding R{\'e}nyi divergence minimization problem can be constructed by choosing the order $\alpha$ as
\begin{equation}
    \alpha = 1 + \lambda \theta.
\end{equation}
Moreover, the optimal control input $\tilde{u}_t^\ast$ for \probref{prob:risk_sensitive} can be recovered from the optimal control input $u_t^\ast$ that induces the optimal solution to \probref{prob:renyi} via the relation $\tilde{u}_t^\ast = \alpha u_t^\ast$.

As the order $\alpha \to 1$, the R{\'e}nyi divergence $D_\alpha(P \parallel Q^\ast)$ converges to the KL divergence $D(P \parallel Q^\ast)$.
In this limit, R{\'e}nyi divergence minimization recovers KL control.
Indeed, using \eqref{eq:total_cost} and \eqref{eq:dP_dQ_star}, it follows that
\begin{align}
    D(P \parallel Q^\ast)
    &= \mathbb{E}^P \qty[ \log \frac{dP}{dQ^\ast} ], \\
    &= \mathbb{E}^P \qty[ \log \frac{dP}{dQ} + \frac{1}{\lambda} C(x_{0:T}) ] + \log Z, \\
    &\propto \mathbb{E}^P \qty[ \int_0^T \ell(t, x_t) dt ] + D(P \parallel Q).
\end{align}
Correspondingly, the risk sensitivity $\theta \to 0$ reduces the risk-sensitive control problem to a risk-neutral one.
Note that, via Girsanov theorem, KL control is well known to be equivalent to risk-neutral control~\cite{kazim_recent_2024,patil_path_2025}.

The control-cost weighting matrix $R_t$ in the theorem represents a noise-dependent control penalty.
Specifically, it penalizes control inputs more heavily in directions with smaller noise variance, thereby favoring control action along directions in which the system is more strongly excited by the noise.
Furthermore, for a fixed $\lambda$, this matrix is scaled by $1 / \alpha$, directly coupling the control cost with the risk parameter.
In a risk-averse setting ($\alpha > 1$), the control penalty $R_t$ is attenuated, enabling the controller to apply larger control inputs to actively suppress state deviations and avoid worst-case scenarios.
Conversely, in a risk-seeking setting ($0 < \alpha < 1$), the control penalty $R_t$ is amplified, penalizing control effort more severely and leading the system to tolerate disturbances rather than expending control effort to reject them.

From the perspective of R{\'e}nyi variational inference~\cite{li_renyi_2016}, the divergence order $\alpha$ modulates the trade-off between \emph{zero-forcing} and \emph{mass-covering} behaviors.
A zero-forcing approximation discourages the approximating distribution from assigning probability mass to regions of low target density, whereas a mass-covering approximation tends to cover regions carrying significant probability mass under the target distribution.
Through the equivalence established in the theorem, these approximation tendencies translate directly to distinct risk attitudes in control.
In the risk-averse setting ($\alpha > 1$, i.e., $\theta > 0$), minimizing $D_\alpha(P \parallel Q^\ast)$ strengthens the zero-forcing tendency.
Because $Q^\ast$ assigns exponentially small probability to high-cost trajectories, the controlled path measure $P$ is strongly discouraged from allocating substantial mass to such trajectories, thereby steering the system away from high-cost regions.
Conversely, in the risk-seeking setting ($0 < \alpha < 1$, i.e., $\theta < 0$), the approximation becomes relatively more mass-covering.
Consequently, placing probability mass in regions where $Q^\ast$ is small is penalized less severely, allowing the system to tolerate high-cost tail events rather than expending excessive control effort.
Unlike the existing minimax KL control framework~\cite{patil_path_2025}, which is restricted to risk-averse settings ($\theta > 0$), our approach provides a unified information-theoretic formulation for both risk-averse and risk-seeking control via the intrinsic properties of R{\'e}nyi divergence.

Interestingly, the authors of~\cite{li_renyi_2016} observed that $\alpha = 0.5$ provided a favorable empirical trade-off between test log-likelihood and prediction error in their Bayesian neural-network regression experiments.
At this order, the R{\'e}nyi divergence satisfies
\begin{equation}
    D_{0.5}(P \parallel Q) = -2 \log( 1 - \frac{\mathrm{Hel}^2(P, Q)}{2} ),
\end{equation}
where $\mathrm{Hel}^2(P, Q)$ denotes the squared Hellinger distance~\cite{van_erven_renyi_2014}.
Additionally, $\alpha = 0.5$ is the unique order at which the R{\'e}nyi divergence is symmetric.
This symmetry may provide a useful compromise between zero-forcing and mass-covering tendencies.
In our control formulation, $\alpha = 0.5$ corresponds to the risk sensitivity $\theta = -1 / (2\lambda) < 0$, representing a specific risk-seeking control policy.
While whether this choice provides an optimal trade-off in practice depends on the specific task, it serves as a meaningful baseline for risk-seeking scenarios.

\section{Conclusion}
\label{sec:conclusion}

In this study, we established the equivalence between the risk-sensitive control problem and the R{\'e}nyi divergence minimization problem for continuous-time nonlinear stochastic systems.
By leveraging Girsanov theorem and constructing an appropriate change of path measure, we derived the corresponding control cost matrix $R_t$ and the explicit relationship between the parameters $\theta$ and $\alpha$.
This equivalence bridges information-theoretic control formulations and risk-sensitive optimal control.
Moreover, it provides a unified probabilistic inference framework for both risk-averse and risk-seeking control problems.

Future work will focus on developing solution methods for the R{\'e}nyi divergence minimization problem.
We also plan to clarify the relationships between the R{\'e}nyi divergence framework and other optimal control formulations.

\bibliographystyle{IEEEtran}
\bibliography{reference}

\end{document}